\newcommand{\inte}{\operatorname*{int}}
\newcommand{\bd}{\operatorname*{bd}}
\newcommand{\Rt}{\mathbb{R}^3}
\newcommand{\Rn}{\mathbb{R}^{n}}
\newtheorem{lemma}{Lemma}
\newtheorem{theorem}{Theorem}
 \title{Characterization of the sphere by means of congruent support cones}
\author{E. Morales Amaya} 
\address{Facultad de Matem\'aticas-Acapulco, Universidad Aut\'onoma de Guerrero, Carlos E. Adame 54, Col. Garita C.P. 39650, Acapulco, Guerrero. Mexico.}
\email{emoralesamaya@gmail.com}
\thanks{I would like to thank to Luis Montejano for the interesting conversations and suggestions regarding the use of topology of fibre bundles in convex geometry. This research was supported by the National Council
of Humanities Sciences and Technology (CONAHCyT) of Mexico, SNI 21120}
\begin{document}

\begin{abstract}
Let $M$ be a convex body and let $K$ be a closed convex surface $K$ both contained in the Euclidean space $\mathbb{E}^3$. What can we say about $M$ if $K$ encloses $M$ and if from all the points in $K$ the body $M$ looks the same? In this work we are going to present a result which claims that if for every two support cones $C_x$, $C_y$ of $M$, with apexes $x,y \in  K$, respectively, there exists $\Phi$  in the  semi direct product of the orthogonal group $O(3)$ and $\mathbb{E}^3$ such that
$$C_y=\Phi(C_x),$$
and this can be done in a continuous way, then $M$ is a sphere.
\end{abstract}
\maketitle
\textbf{Introduction.} 
Let $\mathbb{E}^{n+1}$ be the Euclidean space of dimension $n+1$ endowed with the usual inner pro\-duct $\langle \cdot, \cdot\rangle : \mathbb{E}^{n+1} \times \mathbb{E}^{n+1} \rightarrow \mathbb{R}$.  
For $n \geq 2$, we denote by $A(n+1)$ the affine group of $\mathbb{E}^{n+1}$ and by $O(n+1)$ the \textit{orthogonal group}, i.e., the set of all the isometries of $\mathbb{E}^{n+1}$ that fix the origin (equivalently, the collection of all the orthogonal matrices, where an $(n+1)\times (n+1)$ matrix  $D$ is said to be orthogonal if $D^{t}D$ is the identity matrix).  
    
Let $M\subset \mathbb{E}^{n}$ be a convex body. Given a point $x \in \mathbb{E}^{n+1} \backslash M$ we denote the cone generated by $M$ with apex $x$ by $S_x$, that is, $S_x := \{x + \lambda(y - x) : y \in M, \lambda \geq  0\}$, and by $C_x$ the boundary of $S_x$, in other words, $C_x$ is the support cone of $M$ from the point $x$. Let $x,y\in \mathbb{E}^{n+1}$. The convex cones $C_x$, $C_y$ are said to be \textit{affinely congruent} if there exists $\Phi \in A(n+1)$ of the form $\Phi=a+\Omega$, $a\in \mathbb{E}^{n+1}$ and $\Omega \in O(n+1)$, such that $\Phi(C_x)=C_y$.   

Let $M,K\subset \mathbb{E}^{3}$ be convex bodies, $M\subset \inte K$. Suppose that all the support cones of $M$ with apexes in $\bd K$ are affinely congruent. Furthermore, assume that if $x_0,y_0,x,y\in \bd K$,   $\Omega_1, \Omega_2\in O(3)$ and $a_1,a_2\in \mathbb{E}^{3}$ are such that $C_x=\Phi_1(C_{x_0})$ and $C_y=\Phi_2(C_{y_0}),$
where $\Phi_1:=a_1+\Omega_1$, $\Phi_2:=a_2+\Omega_2$ , and if $y_0$ \textit{is close to} $x_0$ and $y$ \textit{is close to} $x$ (in $\mathbb{E}^3$) then $\Phi_1$ \textit{is close to} $\Phi_2$ (Since the affine group $A(n+1)$ is composed of matrices it has a natural topology, namely the subspace topology from $\mathbb{E}^{(n+1)^2}$). If this is so we say that the support cones of $M$, with apexes in $\bd K$, are affinely congruent in a \textit{continuous way}.

Our main  result in this work is the following theorem.
\begin{theorem}\label{marisol}
Let $M,K\subset \mathbb{E}^{3}$ be convex bodies, $M\subset \inte K$. Suppose that all the support cones of $M$ with apexes in $\bd K$ are affinely congruent in a continuous way. Then $M$ is a ball.
\end{theorem}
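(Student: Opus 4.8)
The plan is to promote the global ``same shape'' hypothesis on the support cones into a pointwise statement about the second fundamental form of $\bd M$, and then to close the argument with the topology of the $2$-sphere. Throughout I identify $\bd K$ and $\bd M$ with $S^2$, and I use that every congruence $\Phi=a+\Omega$ in the hypothesis is a rigid motion, so after translating the apex of each cone to the origin the relation $C_y=\Phi(C_x)$ becomes $\widehat C_y=\Omega\,\widehat C_x$ with $\Omega\in O(3)$, where $\widehat C_x:=C_x-x$. Fixing a reference apex $x_0$, the continuity hypothesis gives a continuous assignment $x\mapsto \Omega_x\in O(3)$ with $\widehat C_x=\Omega_x\widehat C_{x_0}$, well defined modulo the symmetry group $G:=\{\Omega\in O(3):\Omega\widehat C_{x_0}=\widehat C_{x_0}\}$. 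Since $\bd K\cong S^2$ is simply connected, the resulting map $\bd K\to O(3)/G$ lifts, so one may track ``corresponding generators'' of the cones coherently as the apex moves.

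Next I would record two elementary consequences of congruence. First, the solid angle subtended by $M$ is the same from every $x\in\bd K$; denote it $\omega_0$. Second, and crucially, every boundary point of $M$ is seen tangentially from $\bd K$: given $p\in\bd M$ and a supporting plane $H$ of $M$ at $p$, any tangent ray to $M$ issuing from $p$ inside $H$ leaves $\inte K$ and meets $\bd K$ at some apex $x$, so $p$ lies on the shadow boundary $\gamma_x$ and the generator of $C_x$ through $p$ is the line $\overline{xp}$. Varying the tangent direction in $T_pM$ produces a whole circle of such apexes $x\in\bd K$ realizing $p$ as a tangency point along every admissible generator direction.

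The heart of the argument is the local translation between the cone and the curvature of $\bd M$ (carried out, after an approximation that makes $\bd M$ twice differentiable, which the congruence and continuity hypotheses facilitate). Along the generator of $C_x$ through $p$, the geodesic curvature of the spherical silhouette $\partial R_x$ (the trace of $\widehat C_x$ on the unit sphere) is a function of the normal curvature of $\bd M$ at $p$ in the direction transverse to $\overline{xp}$ and of the distance $|x-p|$. Using the coherent identification of generators from the first paragraph, congruence forces the geodesic-curvature profile of $\partial R_x$ to agree, generator by generator, with that of the reference cone; running $x$ through the circle of apexes attached to a fixed $p$ then shows that the second fundamental form of $\bd M$ at $p$, up to a rotation of $T_pM$, is independent of $p$. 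In particular the unordered pair of principal curvatures $\{\kappa_1(p),\kappa_2(p)\}$ is the same at every point of $\bd M$. I expect this step to be the main obstacle: one must disentangle the perspective dependence on $|x-p|$ (here the constant solid angle $\omega_0$ and the continuity of $x\mapsto\Omega_x$ do the bookkeeping) and control the ambiguity coming from a possibly nontrivial symmetry group $G$, which is exactly what the simple-connectivity of $\bd K$ tames.

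Finally I would invoke the topology of $S^2$ to finish. On the convex surface $\bd M\cong S^2$ the field of principal directions is a line field whose singularities are precisely the umbilic points; since $\chi(S^2)=2\neq 0$, the Poincar\'e--Hopf index theorem forces at least one umbilic point $p_0$, where $\kappa_1(p_0)=\kappa_2(p_0)$. Combined with the constancy of $\{\kappa_1,\kappa_2\}$ from the previous step, this gives $\kappa_1\equiv\kappa_2$ on all of $\bd M$, so $\bd M$ is totally umbilic with constant principal curvature. A closed totally umbilic surface in $\mathbb{E}^3$ is a sphere, hence $M$ is a ball, as claimed. As a consistency check, the hypothesis then forces $\bd K$ to be a sphere concentric with $M$, which is precisely the configuration in which all the support cones are genuinely congruent.
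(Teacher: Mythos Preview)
Your route is quite different from the paper's, and the difference is instructive. The paper never touches the second fundamental form of $\bd M$. Instead it argues purely with the symmetry group $G=\{\Omega\in O(3):\Omega\widehat C_{x_0}=\widehat C_{x_0}\}$ of the model cone: using that the principal bundle $\tau:O(3)\to\mathbb S^2$ has no section, it shows that a global continuous choice $x\mapsto\Omega_x$ must fail at some point $x^*$, producing two distinct isometries sending $\widehat C_{x_0}$ to $\widehat C_{x^*}$, hence $G\neq\{1\}$. From there it reduces to the case where $C_{x_0}$ has an axis of symmetry, takes a perpendicular cross--section $M_{x_0}$, and shows (Lemma~\ref{homeo}) that the axis directions realize a homeomorphism $\bd K\to\mathbb S^2$; the sections $\{M_{\eta(x)}\}$ then form a field of congruent planar bodies tangent to $\mathbb S^2$, which by Mani's theorem forces $M_{x_0}$ to be a disc, so every $C_x$ is a right circular cone and Matsuura's theorem finishes. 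Note that this is essentially the \emph{opposite} use of simple connectivity from yours: the paper exploits the \emph{nonexistence} of a global lift to force $G$ to be nontrivial, whereas you assume a lift in order to ``track generators coherently''.

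Your proposal, by contrast, has a genuine gap at its self--identified ``heart''. First, the lifting step needs a case split you do not make: $O(3)\to O(3)/G$ is a covering only when $G$ is finite, and when $G$ is infinite (cone of revolution) you should dispose of that case directly via Matsuura before appealing to $\pi_1(S^2)=0$. Second, and more seriously, the passage from ``congruent spherical silhouettes'' to ``constant unordered pair $\{\kappa_1,\kappa_2\}$ on $\bd M$'' is only asserted. You note correctly that the geodesic curvature of $\partial R_x$ at the generator through $p$ involves both the normal curvature of $\bd M$ at $p$ and the distance $|x-p|$, but you never show how the distance dependence cancels; as $x$ runs over the circle of apexes attached to a fixed $p$ the distances $|x-p|$ genuinely vary (since $\bd K$ is not assumed to be a sphere), and the ``coherent identification of generators'' via $\Omega_x$ does not by itself pin down which generator of the reference cone corresponds to $p$ in a way that makes the comparison go through. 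The approximation making $\bd M$ twice differentiable is also unjustified: smoothing $M$ destroys the exact congruence of the cones, and you give no mechanism to recover a pointwise statement about curvature in the limit. Until the distance bookkeeping is actually carried out (or replaced by a different invariant of the cone that is manifestly independent of $|x-p|$), the argument does not close; the paper sidesteps all of this by never looking at $\bd M$ locally.
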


 
           
\textbf{Motivation.} Let us consider the following problem.

\textbf{Problem 1.} \textit{Determine properties of convex bodies $K$ in $\mathbb{E}^n$ imposing conditions on the  support cones of $K$ whose apexes are in a fixed hyperplane.}

Let $M\subset \mathbb{E}^n$ be a convex body, $n\geq 3$. Suppose $H$ is a hyperplane not intersecting the convex body $M$. As a first example of a result related to Problem 1 we have: if, for every $x \in H$, the cone $C_x$ is ellipsoidal (i.e., it has a $(n-1)$-section which is an ellipsoid), then $M$ is an $n$-ellipsoid. 

Notice that if the cone $C_x$ is ellipsoidal, then every bounded  $(n-1)$-section  is an ellipsoid (see, for example, Lemma 1 in \cite{egj}). Let $\phi:\mathbb{P}^n \rightarrow \mathbb{P}^n$ be a projective isomorphism such that $\phi(H)$ is the hyperplane at the infinite $H_{ \infty}$. Since every bounded  $(n-1)$-section of $C_x$ is an ellipsoid it follows that every bounded  $(n-1)$-section of the cylinder $\phi(C_x)$ is an ellipsoid. In particular, the section given by an hyperplane perpendicular to the lines defined by $\phi(C_x)$. That is, the orthogonal projection of $\phi(M)$, in the direction parallel to the lines defined by $\phi(C_x)$, is an ellipsoid. Thus the above condition is equivalent that all the orthogonal projections of $\phi(M)$ are ellipsoids. It is well known that such condition implies that the body $\phi(M)$ is an ellipsoid (see Lemma 2 in \cite{Chakerian}). Hence $M$ is an ellipsoid.

On the other hand, as another example, in 1959, Marchaud \cite{Marchaud} supposed that $S_x\cap \bd M$ is flat, for every $x \in H$, and proved that $M$ must be an ellipsoid (The case $H\cap K\not=\emptyset$ was considered in \cite{MM2}). 
 
Naturally, we can replace in the Problem 1 the condition that the set of apexes is a subset of a hyperplane by the condition that they are in a closed hypersurface $S$ which contains in its interior the body $M$. In particular, we can assume that $S$ is the boundary of a convex body $K \subset \Rn$ such that $M \subset \inte K$. An interesting example of this type is the well known \cite{Matsuura}: 
 
\textbf{Matsuura's Theorem}. Let $M \subset \Rt$ be a convex body and let
$S$ be the boundary of a convex body $K \subset \Rn$ such that $M \subset \inte K$. If the support cone $C_x$ of $M$ is a right circular cone, for every $x\in S$, then $M$ is a Euclidean ball.

(The theorem of Matsuura is slightly more general but for our purposes the above version is good enough).   

By virtue of the observation above, we are interested in the following problem relative to support cones of a convex body in $\mathbb{E}^n$. 

\textbf{Problem 2.} \textit{Given a subgroup $G$ of the general linear group $GL(R, n)$, with $n \geq 3$ and a hypersurface $S$ which is the image of an embedding of $\mathbb{S}^{n-1}$, determine the convex bodies $K\subset \mathbb{E}^n$ such that for any two support cones $\Lambda, \Gamma$ of $K$, with apexes in $S$, there exists an element $\Phi \in G$ with $\Phi(\Lambda)=\Gamma$}.

Next we mention  some related results. Let $M\subset \mathbb{E}^n$, $n \geq 3$, be a convex body and let $S$ be a hypersurface which is the image of an embedding of the sphere $\mathbb{S}^{n-1}$ such that $M\subset \inte S$. It was proven in \cite{bg} that if, for every $x \in S$, the cone $C_x$ is ellipsoidal, then $M$ is an $n$-ellipsoid. On the other hand, it was proven in \cite{banda1} that if $M$ is strictly convex and, for every $x \in S$, there exists $y\in S$ with the property that $C_x$ and $C_y$ differ by a central symmetry, then $M$ and $S$ are centrally symmetric and concentric.

Very recently in \cite{Myr}  it has been proved that if $P$ and $Q$ are polytopes contained in the interior of the ball $B_r(n)$, $n\geq 3$, and if from every point in the sphere $r\mathbb{S}^{n-1}$ the support cones of $P$ and $Q$ are congruent, then $P=Q$. 

For more results similar or related to this, the interested reader may consult the re\-fe\-ren\-ces \cite{bg}, \cite{IJED1}, \cite{IJED2}, \cite{banda2}, \cite{mackey}.

Our proof of Theorem \ref{marisol} requires some notions and results from Topology. Three important elements in our proof of Theorem \ref{marisol} are the notions of a \textit{principal fibre bundle}, a \textit{section} of a principal fibre bundle and \textit{field of congruent bodies}. In fact, we will use the \textit{non existence of a section} of the principal fibre bundle $\tau: \mathbb{S}^2 \rightarrow O(3)$ (see the Apendix). The Topology of fibre bundles in convexity has proven to be a remarkable tool in relation to the so-called Banach Isometric Conjecture relative to the determination of  a convex body by means congruent sections passing through a fixed point \cite{mani}, \cite{monte}. 
\section{Preliminaries.} 
We take an orthogonal system of coordinates $(x_1,...,x_{n+1} )$ for  $\mathbb{E}^{n+1}$ such that the origin $O$ is at the center of the unit sphere 
$\mathbb{S}^{n}=\{x\in \mathbb{E}^{n+1}: \|x\| = 1\}$. For $u\in \mathbb{S}^{n}$, we denote by $H^ {+}(u)$  the half-space $\{x \in \mathbb{E} ^{n}: x\cdot u \leq 0\}$ with unit normal vector $u$, by $H(u)$ its boundary hyperplane $\{x \in \mathbb{E} ^{n}: x\cdot u = 0\}$ and by $E(u)$ the affine hyperplane $H(u)+u$.



A body $K$ is origin symmetric if whenever $x \in K$, it follows that $-x \in K$. A body $K$ is centrally symmetric if a translate of $K$ is origin symmetric, i.e. if there is a vector $c \in \mathbb{E}^{n+1}$ such that $K-c$ is origin symmetric. 

Two convex bodies $M,N\subset \mathbb{E}^{n+1}$ are said to be \textit{congruent} if there exist an isometry $I:\mathbb{E}^{n+1} \rightarrow \mathbb{E}^{n+1}$ such that $I(M)=N$.

\textbf{Axis of symmetry, plane of symmetry.}
Let $L\subset \mathbb{E}^3$ be a line and let $\theta$ be an angle in 
$[0, 2\pi]$. We denote by 
$R_{(L,\theta)}:\mathbb{E}^3 \rightarrow \mathbb{E}^3$ the rotation with axis $L$ and with angle $\theta$. We just denote by $R_L$ and $R_{L,n}$ the maps $R_{(L,\pi)}$, $R_{(L, \frac{2\pi}{n})}$, respectively.

Let $K \subset \mathbb{E}^3$ be a convex body, $L\subset \mathbb{E}^3$ be a line and $n$ be an integer, $n \geq 2$.
The line $L$ is said to be an $n$-\textit{axis of symmetry} of $K$ if the following relation
\[
R_{L,n}(K) = K
\]
holds. In the case $n = 2$, a 2-axis of symmetry of the convex body K will just called axis of symmetry of $K$. For instance, if $C = [-1, 1] \times [-1, 1] \times [-1, 1]$ is a cube, centered at the origen, it has three types of axis of symmetry, namely,  the lines determined by the mid points of parallel edges not in the same face, the diagonals, and the lines determined by the centres of parallel faces, corresponding to the numbers 2,3 and 4.

Let $K \subset \mathbb{E}^3$ be a convex body and let
$\Pi$ be a plane. We denote by $S_{\Pi} :  \mathbb{E}^3 \rightarrow \mathbb{E}^3$ the reflection with respect
to $\Pi$. The plane $\Pi$ is said to be a \textit{plane of symmetry} of $K$ if the relation
\begin{eqnarray}\label{mami}
S_{\Pi} (K) = K
\end{eqnarray}
holds. 
  
\textbf{Field of congruent bodies.}
In order to prove the Theorem \ref{marisol} we need the following important definition. When $A$ is a $n$-dimensional convex body in the Euclidean space $\mathbb{E}^{n}$, a \textit{field of bodies congruent to} $A$ is a continuous function $A(u)$ defined for $u$ in the unit sphere $\mathbb{S}^n$, where $A(u)$ is a congruent copy of $A$ lying in a hyperplane of $\mathbb{E}^{n+1}$ perpendicular to $u$ and tangent to $\mathbb{S}^n$; here $A(u)$ is meant to be continuous in the Hausdorff metric. If additionally $A(u)-u= A(-u)+u$ for each $u$, we say $A(u)$ is a \textit{complete turning of} $A$ in $\mathbb{E}^{n+1}$. 

In \cite{had} Hadwiger proved that: \textit{No $n$-dimensional convex body with trivial group of symmetry}  \textit{can be completely turned in the euclidean $(n+1)$-space $\mathbb{E}^{n+1}$}. In \cite{mani} Mani showed that \textit{if $n\not=3,7,$ no $n$-dimensional convex body with finite group of symmetries determines a field of congruent convex bodies tangent to $\mathbb{S}^n$}. On the other hand, Mani also proved  that\textit{if $n$ is even only with spheres can be constructed fields of congruent convex bodies tangent to $\mathbb{S}^n$}. Furthermore, Mani claimed ``for odd dimensional spheres we have not found a general characterizations of bodies which can be turned around them". 

In \cite{burton} Burton made use of the methods of \cite{had} and \cite{mani} to prove that a \textit{3-dimensional convex body $M\subset \mathbb{E}^{4}$ can be completely turned in $\mathbb{E}^{4}$ if and only if $M$ is centrally symmetric}. In \cite{monte} Montejano, using methods of topology of fiber bundles,  generalized Burton's result: \textit{a convex body $M\subset \mathbb{E}^{n}$, $n \geq 2$, can be completely turned in $\mathbb{E}^{n+1}$ if and only if $M$ is centrally symmetric}. 

If all the $n$-dimensional sections of a $(n+1)$-dimensional convex body through a fixed inner point are congruent, clearly they give rise to a complete turning of some $n$-dimensional body in $\mathbb{E}^{n+1}$.

\section{Proof of Theorem \ref{marisol}}
  \textbf{The strategy of the proof.} The proof is structured in the following way:
\begin{itemize}
\item [a)] We will formalize the fact that the support cones of $M$ with apexes in $\bd K$ are affinely congruent in a continuous way.
\item [b)] Using the fact that the principal fibre bundle $$\tau: \mathbb{S}^2 \rightarrow O(3)$$ is not trivial (see the Appendix, special attention should be paid to relations i'), ii'), iii') and iv') and equations (\ref{pedro}), (\ref{infante}) and (\ref{negrete}), derived from the fact that the tangent bundle of $\mathbb{S}^2$ is trivial and compare these relations with i), ii), iii) and iv) and equations (\ref{lima}), (\ref{limonada}) and (\ref{fresa}) obtained in point b) ), we will prove that the cone $C_x$, $x\in K$, has either a plane of symmetry or an axis of symmetry.  
\item [c)] We will show that if the cone $C_x$, $x\in K$, has a plane of symmetry, then it is either a right circular cone or it has an axis of symmetry.
\item [d)] We will prove that if the cone $C_x$ has an axis of symmetry, then we can construct a field of congruent bodies and from here we conclude that the supporting cones $C_x$ of $M$, $x\in K$, are right circular cones. 
\item [e)] We will conclude that $M$ is a sphere using Matsuura's Theorem.
\end{itemize}
\textbf{a)} Notice that if $x_0,x\in \bd K$ and $\{x_n\} \subset \bd K$ are such that $x_n\rightarrow x$, when $n \rightarrow \infty$, since the support cones of $M$, with apexes in $\bd K$, are affinely congruent in a continuous way, there exist $\Omega \in O(3)$, a sequence 
$\{\Omega_n\}\subset O(3)$, a sequence $\{a_n\}\subset \mathbb{E}^3$ and maps $\Phi=a+\Omega$, $\Phi_n=a_n+\Omega_n$, $n=1,2,...$ with the property that 
\begin{eqnarray}\label{puchita}
\Phi(C_{x_0})=C_x, 
\end{eqnarray}
\begin{eqnarray}\label{puchis}
\Phi_n(C_{x_0})=C_{x_n},
\end{eqnarray}
$n=1,2...$ and 
\begin{eqnarray}\label{limon}
\Phi_{n} \rightarrow \Phi,
\end{eqnarray} 
when $n \rightarrow \infty$ (in the subspace topology from $\mathbb{E}^{(n+1)^2}$). Thus
\begin{eqnarray}\label{pompita}
C_{x_n} \rightarrow C_x
\end{eqnarray}
when $n \rightarrow \infty$ (in the Hausdorff metric) and
\begin{eqnarray}\label{pozole}
\Omega_{n} \rightarrow \Omega.
\end{eqnarray} 
when $n \rightarrow \infty$ (in the subspace topology from $\mathbb{E}^{(n+1)^2}$). Indeed, by (\ref{puchita}) $x= a+\Omega(x_0)$, that is, $a=x-\Omega(x_0)$. Analogously, by (\ref{puchis}) $x_n= a_n+\Omega_n(x_0)$, $n=1,2,...$, hence $a_n=x_n-\Omega_n(x_0)$. Therefore
$\Phi=(x-\Omega(x_0))+\Omega$, $\Phi_n=(x_n-\Omega_n(x_0))+\Omega_n$, $n=1,2,...$ From these relations and (\ref{limon}) it follows that (\ref{pozole}) holds.

We choose the origen of a system of coordinates at an interior point of $M$. We define the continuous map $\phi:\bd K \rightarrow \mathbb{S}^2$ as the inverse of the central projection $x\mapsto u:=\frac{x}{\|x\|}$. From now on we will fix a point $x_0\in \bd K$. In terms of the function $\phi$ the last paragraph can be re written in the following way. Let $x\in \bd K$ and let $\{x_n\}\subset \bd K$ be a sequence such that $x_n\rightarrow x$ when $n \rightarrow \infty$. Let $u:=\frac{x}{\|x\|}$ and $u_n:=\frac{x_n}{\|x_n\|}$. By the hypothesis there exist $\Omega_{u}\in O(3)$, a sequence  $\{\Omega_{u_n}\} \subset O(3)$ such that maps $\Phi_x=a_x+\Omega_{u}$, $\Phi_{x_n}=a_{x_n}+\Omega_{u_n}$, $n=1,2..$ have the property that (\ref{puchita}), (\ref{puchis}) holds for the maps $\Phi_x$ and $\Phi_{x_n}$, respectively. We also have that (\ref{limon}) holds for $\Phi_x$ and $\Phi_{x_n}$ and, finally, for the maps $\Omega_{u}$ and  
$\Omega_{u_n}$ the relation (\ref{pozole}) holds. 

\textbf{b)} Since the fibre bundle $\tau: \mathbb{S}^2 \rightarrow O(3)$ is not a product, it does not have a section (see the Appendix). Thus there exists 
\begin{itemize}
\item [i)] $x^* \in \bd K$ and sequences $\{x_n\}, \{z_n\}\subset \bd K$    such that $x_n\rightarrow x^*$ and  $z_n\rightarrow x^*$
when $n \rightarrow \infty$. 
\item [ii)] $\Omega_{u^*}\in O(3)$, a sequence  $\{\Omega_{u_n}\} \subset O(3)$ such that maps 
$$\Phi_{x^*}=a_{x^*}+\Omega_{u^*}, \textrm{   }\textrm{  }\Phi_{x_n}=a_{x_n}+\Omega_{u_n}, \textrm{   } \textrm{   }n=1,2,...$$ 
have the property that (\ref{puchita}), (\ref{puchis}) holds for the maps $\Phi_{x^*}$ and $\Phi_{x_n}$, respectively. We also have that (\ref{limon}) holds for $\Phi_{x^*}$ and $\Phi_{x_n}$. Furthermore, in particular, we have that
\begin{eqnarray}\label{lima}
\Omega_{u_n} \rightarrow \Omega_{u^*}
\end{eqnarray}
when $n\rightarrow \infty$ (in the subspace topology from $\mathbb{E}^{(n+1)^2}$), where $u^*:=\frac{x^*}{\|x\|}$ and $u_n$ is defined as before (see Fig. \ref{baker}). 
 \item [iii)] $\bar{\Omega}_{u^*}\in O(3)$, a sequence  $\{\Omega_{v_n}\} \subset O(3)$ such that maps 
$$\bar{\Phi}_{x^*}=\bar{a}_{x^*}+\bar{\Omega}_{u^*}, \textrm{  }\textrm{  }\Phi_{z_n}=a_{z_n}+\Omega_{v_n}, \textrm{  }\textrm{  } n=1,2,..$$
have the property that (\ref{puchita}), (\ref{puchis}) holds for the maps $\bar{\Phi}_{x^*}$ and $\Phi_{z_n}$, respectively. We also have that  and (\ref{limon}) holds for $\bar{\Phi}_{x^*}$ and $\Phi_{z_n}$. Furthermore, in particular, we have that
\begin{eqnarray}\label{limonada}
\Omega_{v_n} \rightarrow \bar{\Omega}_{u^*}
\end{eqnarray} 
when $n\rightarrow \infty$  (in the subspace topology from $\mathbb{E}^{(n+1)^2}$),  where $v_n:=\frac{z_n}{\|x_n\|}$ (notice that $v_n \rightarrow u^*$ when $n\rightarrow \infty$, see Fig. \ref{baker}).  
\item [iv)] The orthogonal maps $\Omega_{u^*}$ and $\bar{\Omega}_{u^*}$ are such that $\Omega_{u^*} \not=\bar{\Omega}_{u^*}$, i.e.,
\begin{eqnarray}\label{fresa}
\Omega_{u^*} ^{-1} \circ  \bar{\Omega}_{u^*}\not=\textrm{id}.
\end{eqnarray} 
\end{itemize}
\begin{figure}[H]
    \centering
    \includegraphics[width=.73\textwidth]{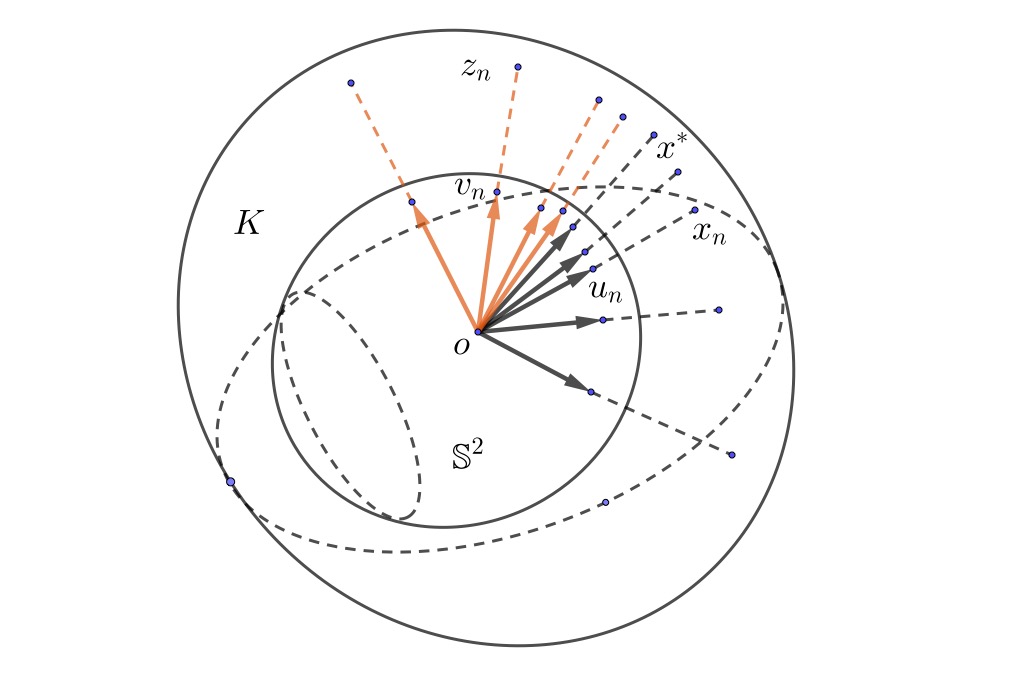}
    \caption{The sequence $\{u_n\}$, $\{v_n\}$ converges to $u^*$ when $n\rightarrow \infty$.}
    \label{baker}
\end{figure}
Notice that (\ref{lima}) and (\ref{limonada}) signifies that we have two ways to converge to $u^*$, in $\mathbb{S}^2$, and the corresponding sequence of elements of $O(3)$  converges to two different maps, it was stablished in iv).
 
By (\ref{limon}) applied to $\Phi_{x_n}$ and by (\ref{puchita}) applied to $\Phi_{x^*}$ it follows that
\begin{eqnarray}\label{oasis}
\Phi_{x_n}(C_{x_0})\rightarrow \Phi_{x^*}(C_{x_0})=C_{x^*}
\end{eqnarray}
when $n\rightarrow \infty$. On the other hand, by (\ref{limon}) applied to $\Phi_{z_n}$ and by (\ref{puchita}) applied to $\bar{\Phi}_{x^*}$ it follows that
\begin{eqnarray}\label{bolotas}
\Phi_{z_n}(C_{x_0})\rightarrow \bar{\Phi}_{x^*} (C_{x_0})=C_{x^*}
\end{eqnarray}
when $n\rightarrow \infty$. From (\ref{oasis}) and (\ref{bolotas}) it follows that 
\begin{eqnarray}\label{loca}
\Phi_{x^*}(C_{x_0})=\bar{\Phi}_{x^*} (C_{x_0}),
\end{eqnarray}
Thus
\begin{eqnarray}\label{nina}
\Omega_{u^*}(C_{x_0})=\bar{\Omega}_{u^*} (C_{x_0}),
\end{eqnarray}
i.e.,     
\begin{eqnarray}\label{simetrico}
C_{x_0}=(\Omega_{u^*}^{-1} \circ \bar{\Omega}_{u^*}) (C_{x_0}).
\end{eqnarray} 
From (\ref{simetrico}), by virtue of (\ref{fresa}), the cone $C_{x_0}$ has either an axis of symmetry or a plane of symmetry. 

\textbf{c)} First we suppose that the cone $C_{x_0}$ has a plane of symmetry, say 
$\Pi$. By (\ref{loca}), $\Phi_{x^*}(\Pi)$ and $\bar{\Phi}_{x^*}(\Pi)$ are planes of symmetry of $C_{x^*}$. By iv),  $\Phi_{x^*} \not=\bar{\Phi}_{x^*} $,  consequently, either $\Phi_{x^*}(\Pi)\not= \bar{\Phi}_{x^*}(\Pi)$ or one of the maps $\Phi_{x^*}$, $\bar{\Phi}_{x^*}$ is a reflection with respect to $\Pi$ and the other is a rotation with respect to an axis contained in $\Pi$ by an angle $\pi$, i.e., the cone $C_{x_0}$ has an axis of symmetry. Thus now we assume that $\Phi_{x^*}(\Pi)\not= \bar{\Phi}_{x^*}(\Pi)$. Then either $C_{x_0}$ has an infinite number of planes of symmetry or a finite number of planes of symmetry. In the first case, $C_{x_0}$ is a straight circular cone. In the later case, $C_{x_0}$ has an axis of symmetry.

\textbf{d)} Now we suppose that the cone $C_{x_0}$ has an axis of symmetry, say $L_{x_0}$.  Let $\Delta_{x_0}$ be a plane perpendicular to $L_{x_0}$ which is at distance equal to 1 from $x_0$ and such that $M_{x_{0}}:=C_{x_0}\cap \Delta_{x_0}\not=\emptyset$. Then, for $x\in \bd K$, we define 
$\Delta_x:=\Phi_x(\Delta_{x_0})$ and $M_{\eta(x)}:=\Phi_x(M_{x_{0}})$, where $\Phi_x$ satisfies (\ref{puchita}), and  
  $$\eta: \bd K\to \mathbb S^2, \quad \mbox{is the following map: }$$
 $\eta(x)$ is the normal vector to $\Delta_x$ pointing out to $x$ (notice that $\eta(x)$ is parallel to the axis of $C_{x}$).
  
\begin{lemma}\label{homeo}
The  map $\eta: \bd K\to \mathbb S^2$ is a homeomorphism.
\end{lemma}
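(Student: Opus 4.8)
The plan is to exhibit $\eta$ as a continuous bijection of $\bd K$ onto $\mathbb{S}^2$ and then invoke the standard fact that a continuous bijection from a compact space onto a Hausdorff space is a homeomorphism. Since $K$ is a convex body in $\mathbb{E}^3$, its boundary $\bd K$ is itself homeomorphic to $\mathbb{S}^2$, so we are comparing two copies of the $2$-sphere. Continuity of $\eta$ is the easy part: writing $u=x/\|x\|$ and letting $w_0$ be the unit direction of the distinguished axis $L_{x_0}$ of $C_{x_0}$ (oriented toward $x_0$), the continuous-congruence hypothesis formalized in point a) gives $\eta(x)=\Omega_u(w_0)$, and $u\mapsto\Omega_u$ is continuous. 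First I would note that $\eta$ is well defined independently of the particular $\Phi_x$ in the continuous family: the distinguished axis is fixed, as a line, by every symmetry of $C_{x_0}$, so any two admissible choices of $\Phi_x$ differ by an element fixing $L_{x_0}$ and hence send $L_{x_0}$ to the same axis of $C_x$; the orientation is pinned by requiring the vector to point toward the apex.

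The geometric core is injectivity, and for it I would first record an auxiliary fact: the axis $L_x$ always meets $M$. Indeed $C_x$ is tangent to $\bd M$ along the horizon curve $\Gamma_x$, a closed curve invariant under the symmetry $R$ of $C_x$ that fixes $L_x$; the centroid of $\Gamma_x$ is therefore a fixed point of $R$, hence lies on $L_x$, while it also lies in $\conv\Gamma_x\subseteq M$. Thus $L_x\cap M\neq\varnothing$, which anchors each axis to the body $M$.

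Now suppose $\eta(x)=\eta(y)=u$ with $x\neq y$. Then $C_x$ and $C_y$ are two support cones of the same body $M$, mutually congruent (all support cones are congruent), whose axes $L_x,L_y$ are parallel to $u$ and both meet $M$. The key claim is that a support cone of $M$ of prescribed congruence type and prescribed axis direction is unique. Translating a circumscribed cone of fixed shape and fixed axis direction $u$ in a direction transverse to $u$ destroys tangency to $M$ along a full closed horizon (here convexity of $M$ and the fact that $\Gamma_x$ wraps once around $L_x$ are used), while along $u$ the apex is pinned by the requirement that the generated cone be exactly $C_x$. Hence $C_x=C_y$, and since the apex is recovered as the unique vertex of the cone, $x=y$. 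This uniqueness statement — in particular the exclusion of transverse translates — is the step I expect to be the main obstacle, and it is where convexity of $M$ together with $M\subset\inte K$ must be exploited in earnest.

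Finally I would assemble the topology. Having $\eta$ continuous and injective on the $2$-manifold $\bd K$, invariance of domain shows $\eta$ is an open map, so $\eta(\bd K)$ is open in $\mathbb{S}^2$; being the continuous image of a compact set it is also closed, and since $\mathbb{S}^2$ is connected, $\eta$ is onto. A continuous bijection from the compact space $\bd K$ onto the Hausdorff space $\mathbb{S}^2$ is then a homeomorphism. Equivalently, local injectivity already makes $\eta$ a proper local homeomorphism onto the simply connected space $\mathbb{S}^2$, hence a one-sheeted covering, hence a homeomorphism.
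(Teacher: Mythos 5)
Your proposal has two genuine gaps, both in the geometric core. The decisive one is injectivity: your ``key claim'' --- that a circumscribed cone of $M$ of prescribed congruence type and prescribed axis direction is unique --- is exactly the crux, and it is left unproven. The sketch you give only contemplates \emph{translating} the cone transversally to $u$, but two support cones $C_x$, $C_{x'}$ with $\eta(x)=\eta(x')$ may a priori differ by any element of the stabilizer of $u$ in $O(3)$ (a rotation about an axis parallel to $u$, or a reflection in a plane containing $u$) composed with a translation; ``destroys tangency along a full closed horizon'' is an assertion, not an argument, even in the purely translational case. The paper avoids any such shape-rigidity lemma altogether: assuming $x\neq x'$ with $\Delta_x$ parallel to $\Delta_{x'}$, it argues by the \emph{position} of $x'$ relative to the solid cone $S_x$ --- if $x'$ lies in the interior of the cone, then $C_{x'}\subset C_x$ and $C_x$ could not graze $\bd M$, contradicting that $S_x$ is the support cone of $M$; if $x'$ lies outside, a support plane $\Gamma$ of $C_x$ separating $M$ from $x'$ yields, via the parallel plane $\Gamma'$ through $x'$, a support plane of $C_{x'}$ which must contain a point of $M$, contradicting the separation. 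Both cases contradict the definition of a support cone directly, with no uniqueness-of-circumscribed-cone statement needed; your route would require you to actually prove that statement, which you yourself flag as the main obstacle.

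Your auxiliary fact (``the axis $L_x$ meets $M$'') also rests on a false invariance: the graze $\Gamma_x=\bd M\cap C_x$ is \emph{not} invariant under the symmetry $R=R_{(L_x,\pi)}$ of the cone, because $R$ maps $C_x$ to itself but not $M$ to itself; $R(\Gamma_x)=R(\bd M)\cap C_x$ need not equal $\Gamma_x$, so its centroid need not lie on $L_x$. (Invariance would require $R$ to be a symmetry of the pair $(M,C_x)$, which is not given.) Relatedly, your continuity argument leans on ``$u\mapsto\Omega_u$ is continuous,'' which cannot be assumed: the engine of the whole paper is precisely that no continuous selection of congruences exists (the principal bundle $\tau\colon O(3)\to\mathbb{S}^2$ admits no section), and it is the failure of such a selection that produces the symmetry of $C_{x_0}$ in step b). Continuity of $\eta$ does survive, for the reason you supply under well-definedness: every symmetry of the cone fixes its axis, so $\eta(x)$ depends only on $C_x$, and $x\mapsto C_x$ is continuous in the Hausdorff metric by (\ref{pompita}) --- which is in fact the paper's continuity proof. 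Your surjectivity step (invariance of domain, image open and closed, connectedness of $\mathbb{S}^2$, then a continuous bijection from a compact space to a Hausdorff space) is sound and essentially equivalent to the paper's homological argument that a proper compact subset of $\mathbb{S}^2$ has vanishing $2$-dimensional homology.
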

\begin{proof} 
First, we are going to prove that $\eta: \bd K\to S^2$ is a continuous map. 
We will prove that if $x\in \bd K$ and $\{x_n\}\subset \bd K$ is a sequence such that $x_n\rightarrow x$ when $n \rightarrow \infty$, then $\eta(x_n)\rightarrow \eta(x)$ when $n \rightarrow \infty$. By (\ref{pompita}), the axis of symmetry $L_{x_n}$ of $C_{x_n}$ tends to the axis of symmetry $L_x$ of $C_{x}$ when $n \rightarrow \infty$. Thus  
$\Delta_{x_n}\rightarrow \Delta_x$ when $n \rightarrow \infty$, i.e., $\eta(x_n)\rightarrow \eta(x)$ when $n \rightarrow \infty$, i.e., $\eta$ is continuous.

Now we will prove that $\eta$ is injective. Suppose that $x\not=x' \in \bd K$ and $\Delta_x$, $\Delta_x'$ are parallel (never the same, otherwise, $x=x'$). First, we assume that $x'$ lies in the interior of $C_x$. It follows that $C_{x'}\subset C_x$. Thus $S_x\subset \mathbb{E}^3 \backslash S_{x'}$. This implies that $S_x\cap \bd M=\emptyset$  but this contradicts that $S_x$ is the support cone of $M$ with apex $x$. 
 
Now we assume that $x'\in \mathbb{E}^3 \backslash C_x$. Then there exists a support plane $\Gamma$ of $C_x$ which separates $M$ and 
$x'$. The plane $\Gamma'$, parallel to $\Gamma$ and passing through $x'$, is supporting plane of $C_{x'}$. Hence there is a point of $M$ in $\Gamma'$ but this contradicts that $\Gamma$ separates $M$ and 
$x'$.

Since $\bd K$ is compact, this implies that,  
  $\eta: \bd K\to \eta(\bd K)$  is a homeomorphism and consequently, $H_{2}(\eta(\bd K), \mathbb Z)=\mathbb Z$. On the other hand, any proper subset of $ \mathbb S^2$ has $2$-dimensional homology equal to zero. Hence  $\eta(\bd K)= \mathbb S^2$, as desired.
\end{proof}

\textbf{Proof of Theorem \ref{marisol}.} If $C_{x_0}$ is a straight circular cone then all the support cones $C_x$ of $M$, $x\in \bd K$, are straight circular cones. By the theorem of Matsuura \cite{Matsuura}, $M$ is a sphere.

Now we suppose that the cone $C_{x_0}$ has an axis of symmetry.
By virtue of the Lemma \ref{homeo}, the family $\{M_{\eta(x)}\}$ is a field of congruent convex bodies. By Mani's  Proposition 1 of \cite{mani}, $M_{x_{0}}$ is a circle. Thus for every $x\in \bd K$, the cone $C_x$ is 
is a straight circular cone. By the theorem of Matsuura \cite{Matsuura}, $M$ is a sphere.
\section{Apendix} 
Let $O(n+1)$ be the group of isometries of $\mathbb{E}^{n+1}$. $O(n)$ is a closed subgroup of $O(n+1)$ and the quotient space
$$
O(n+1)/O(n) \textrm{ is homeomorphic to } \mathbb{S}^n.
$$
Furthermore, the quotient map
$$
\tau:O(n+1) \rightarrow \mathbb{S}^n
$$
is a principal bundle with fibre $O(n)$ (see Section 7.6 of \cite{steenrod}). In fact, it is the associated principal bundle of the tangent vector bundle of $\mathbb{S}^n$.

As a corollary we have that two fibre bundles with the same base space and the same fibre $F$ are equal if and only if their associated principal bundle are equal. The next results are contained in \cite{steenrod} (see 8.5 the cross-section Theorem pag. 36]).

\textbf{Theorem 15.7.1.} \textit{A principal bundle with group structure
$\mathcal{G}$ is trivial or the product bundle if and only if it admits a cross section.}

\textbf{Corollary 15.7.1.} \textit{A fiber bundle with group structure $\mathcal{G}$ is the trivial bundle if and only if its associated principal bundle admits a cross section}.

\textbf{Remark 15.7.1.} \textit{A fibre bundle may admit a cross section without being trivial}.
 
The following result was taken from Section 27.4 of \cite{steenrod}.
 
\textbf{Theorem NS.} \textit{The tangent bundles of $\mathbb{S}^1$, $\mathbb{S}^3$, and $\mathbb{S}^7$ are equivalent to product bundles. If $n$ is even, or if $n> 1$ and 
$n= 1\mod 4$, then the tangent bundle of $S^n$ is not equivalent to a product bundle.} 
 
\textbf{Claim.} The principal bundle $\tau:O(3) \rightarrow \mathbb{S}^2$ is not trivial.  

\textit{Proof.} On the contrary, let us assume that the principal bundle $q:O(3) \rightarrow \mathbb{S}^2$ is trivial.  By Theorem 15.7.1. it admits a cross section. Then by Corollary 15.7.1. the tangent vector bundle of $\mathbb{S}^2$ is the trivial bundle. However this contradicts Theorem NS. Thus  $q:O(3) \rightarrow \mathbb{S}^2$ is not trivial.

\textbf{Vector fields on the sphere.}
Vector fields on the sphere with a constant norm have special properties, and the existence and nature of such fields depend on the sphere's dimension. For the unit sphere, particularly odd-dimensional spheres, specific vector fields are known, such as the Hopf vector fields, which are constant-length vector fields tangent to the Hopf fibration (see pag. 106 of \cite{steenrod}) and are also smooth minimal vector fields. On even-dimensional spheres like $\mathbb{S}^2$, the condition of constant norm is more restrictive. 

A vector field on a sphere can have a constant, non-zero norm only if the dimension of the sphere is odd. This is a consequence of the Hairy Ball Theorem \cite{Eisenberg}, which states that any continuous tangent vector field on an even-dimensional sphere must have at least one point where the vector is zero.  
 
The last paragraph, stated in a more specific way for the case of $\mathbb{S}^2$, means that if in the fibre bundle
 $$
\mu: T\mathbb{S}^2 \rightarrow \mathbb{S}^2
$$ 
we define a vector field (a section) $s: \mathbb{S}^2 \rightarrow T\mathbb{S}^2 $ with constant norm there necessarily exists a point
\begin{itemize}
\item [i')] $u^* \in \mathbb{S}^2$ and sequences $\{u_n\}, \{v_n\}\subset \mathbb{S}^2$    such that $u_n\rightarrow u^*$, $v_n\rightarrow u^*$,
when $n \rightarrow \infty$, 
\item [ii')]  and  
\begin{eqnarray}\label{pedro}
s(u_n) \rightarrow s^*,
\end{eqnarray}
 \item [iii')]  
\begin{eqnarray}\label{infante}
s(v_n) \rightarrow \bar{s}^*
\end{eqnarray} 
when $n\rightarrow \infty$,  
\item [iv')] where the vector $s^*$ and $\bar{s}^*$ are tangent vector of $\mathbb{S}^2$ at the point $u^*$ and 
\begin{eqnarray}\label{negrete}
s^*\not=\bar{s}^*.
\end{eqnarray} 
\end{itemize}

\section{Declarations}
Availability of data and material. 
Not applicable. 

Competing interests. 
Not applicable.


\begin{thebibliography}{9}
\bibitem{bg} G. Bianchi and P. Gruber: \emph{Characterization of Ellipsoids}, Archiv der Math. {\bf 49} (1987), 344-350.
 \bibitem{burton} G. R. Burton, \emph{Congruent sections of convex body}. Pacific Journal of Mathematics {\bf 81}, No. 2, 1979.
 
\bibitem{Chakerian} G.D. Chakerian, \emph{The affine image of a convex body of constant breadth}. Israel J. Math. 3, 19-22 (1965). https://doi.org/10.1007/BF02760023 
 
\bibitem{Eisenberg} M. Eisenberg, R. Guy,  \emph{A Proof of the Hairy Ball Theorem}. The American Mathematical Monthly, 86 (7)  (1979), 571-574. https://doi.org/10.1080/00029890.1979.11994857
  
\bibitem{IJED1} I. Gonz\'alez-Garc\'ia, J. Jer\'onimo-Castro, E. Morales-Amaya, and D. J. Verdusco- Hern\'andez, \emph{Sections and projections of nested convex bodies}, Aequat. Math. {\bf 96} (2022), 885 - 900.

\bibitem{IJED2} I. Gonz\'alez-Garc\'ia, J. Jer\'onimo-Castro, E. Morales-Amaya, and D. J. Verdusco- Hern\'andez, \emph{A characterization of the ellipsoid through planar grazes}, Mathematika {\bf 69} (2022), 100 -105.

\bibitem{had} H. Hadwiger, \emph{Vollstdndig stetige Umwendung ebener Eibereiche im Raum}, Studies in mathematical analysis and related topics, (Stanford University Press, Stanford, 1962), 128-131.
  
\bibitem{JMMelipsoides} J. Jeronimo, L. Montejano, E. Morales-Amaya. Elipsoids. (Book in preparation).
\bibitem{banda2} J. Jer\'onimo-Castro, E. Morales-Amaya  and D. J. Verdusco Hern\'andez. \emph{Characterizations of the sphere by means of point-projections}. To appear in Discrete \& Computational Geometry 2025.

\bibitem{mackey} B. Mackey. \emph{SO(2)-congruent projections of convex bodies with rotation about the origin}. Proceedings of the Amer. Math. Soc., {\bf143} (4), 1739-1744. http://www.jstor.org/stable/24507436

\bibitem{mani} P. Mani. \emph{Fields of Planar Bodies Tangent to Spheres}. Monatshefte f\''ur Mathematik {\bf 74}, 145--149 (1970).
\bibitem{Marchaud} Marchaud, A., \emph{Un theoreme sur les corps convexes}. Ann. Scient. Ecole Norm. Super. {\bf 76} (1959), 283-304.
\bibitem{Matsuura} S. Matsuura, \emph{A problem in solid geometry.}  J. Math. Osaka City Univ. {\bf 12} (1961), $89-95.$

 
\bibitem{monte} L. Montejano. \emph{Convex bodies with homothetic sections}. Bull. London Math. Soc. {\bf 23} (1991) 38-386.

\bibitem{MM2} Montejano L., Morales, E., \emph{Characterization of ellipsoids and polarity in convex sets}. Mathematika {\bf 50} (2003), 63-72.

 
\bibitem{banda1} E. Morales-Amaya, J. Jer\'onimo-Castro and D. J. Verdusco Hern\'andez. \emph{A characterization of centrally symmetric convex bodies in terms of visual cones.} Advances in Geometry, vol. 22, no. 4, 2022, pp. 481-486. https://doi.org/10.1515/advgeom-2022-0006

\bibitem{egj} E. Morales-Amaya, G. Mondrag\'on and \\J. Jer\'onimo-Castro. 
\emph{On characteristic properties of the ellipsoid in terms of circumscribed cones of a convex body}.  Bol. Soc. Mat. Mex. (2025) $31-57$. https://doi.org/10.1007/s40590-025-00736-6

\bibitem{Myr} S. Myroshnychenko, \emph{On recognizing shapes of polytopes from their shadows}. Discrete Comput. Geom. {\bf 62} (2019), 856-864. MR4027619 Zbl 1428.52006

  
\bibitem{steenrod} N. Steenrod. The topology of fibre bundles. Princeton University Press. 1950.
 
\end{thebibliography}
\end{document}